\documentclass[12pt]{article}
\usepackage{pgf,tikz}
\usepackage{calc,graphicx, complexity}
\usepackage{graphics}
\everymath{\displaystyle}
\usepackage{graphicx}
\usepackage{color}
\usepackage{amssymb}
\usepackage{amsmath}
\newtheorem{prethm}{{\bf Theorem}}

\newenvironment{thm}{\begin{prethm}{\hspace{-0.5
				em}{\bf .}}}{\end{prethm}}
\newtheorem{prelemma}{{\bf Lemma}}

\newenvironment{lemma}{\begin{prelemma}{\hspace{-0.5
				em}{\bf .}}}{\end{prelemma}}
\newtheorem{preex}{{\bf Example}}

\newtheorem{preprop}{{\bf Proposition}}

\newenvironment{prop}{\begin{preprop}{\hspace{-0.5em}{\bf .}}}{\end{preprop}}
\newtheorem{precor}{{\bf Corollary}}

\newenvironment{cor}{\begin{precor}{\hspace{-0.5
				em}{\bf .}}}{\end{precor}}
\newtheorem{preremark}{{\bf Remark}}

\newtheorem{preprob}{{\bf Problem}}

\newtheorem{predefin}{{\bf Definition}}

\newtheorem{preconj}{{\bf Conjecture}}

\newtheorem{preprobb}{{\bf Problem}}

\newtheorem{prelem}{{\bf Theorem}}

\newtheorem{precla}{{\bf Claim}}

\newenvironment{proof}{{\bf Proof.}\rm }{\hfill{$\Box$}}

\newtheorem{presolution}{{\bf Solution.}}

\def\newpic#1{}
\def\qed{\ifhmode\unskip\nobreak\fi\quad\ifmmode\Box\else$\Box$\fi}

\title{\vspace{0cm}\Large\bf\noindent On ${\rm b}^{\ast}$-Coloring and $z$-Coloring of graphs\\
 with high girth}
\author{\large\bf Zahra Ahmadidahr ~~ Manouchehr Zaker\footnote{mzaker@iasbs.ac.ir}
\vspace{5mm}\\
Department of Mathematics,\\
Institute for Advanced Studies in Basic Sciences,\\
Zanjan 45137-66731, Iran\\
}

\date{}

\begin{document}
\maketitle
\begin{abstract}
\noindent In a proper vertex coloring $c$ of a graph $G$, a vertex $u$ is called a b-vertex if $u$ is adjacent to a vertex in every other color class. A ${\rm b}^{\ast}$-coloring is a proper coloring in which a b-vertex is adjacent to a b-vertex in every other color class. A Grundy coloring is a proper coloring obtained by the First-Fit (greedy) coloring procedure. A $z$-coloring of $G$ is a ${\rm b}^{\ast}$-coloring that is also a Grundy coloring. The ${\rm b}^{\ast}$-chromatic number (resp., $z$-chromatic number), denoted by ${\rm b}^{\ast}(G)$ (resp., $z(G)$), is the maximum number of colors used in a ${\rm b}^{\ast}$-coloring (resp., $z$-coloring) of $G$. Every graph admits a ${\rm b}^{\ast}$-coloring and a $z$-coloring that can be found using a polynomial-time coloring heuristic. Let ${\rm m}^{\ast}(G)$ be the largest integer $k$ such that a vertex of degree at least $k$ in $G$ has $k$ neighbors of degree at least $k$. We employ list-coloring techniques to prove that if $G$ has a girth of at least $7$, then ${\rm b}^{\ast}(G) = {\rm m}^{\ast}(G)+ 1$. A similar result is obtained for graphs of girth at least $6$ when ${\rm m}^{\ast}=3$. Finally, we obtain some results for the $z$-chromatic number. We prove that if the girth is at least $2m^{\ast}(G)+4$ and $G$ contains a specific tree as an ordinary subgraph, then $z(G)= m^{\ast}(G)+1$.
\end{abstract}

\noindent {\bf Keywords:} Graph coloring; b-coloring; ${\rm b}^{\ast}$-coloring; $z$-coloring; girth

\noindent {\bf Mathematics Subject Classification:} 05C15

\section{Introduction}

\noindent All graphs in this paper are undirected without any loops and multiple edges. In a graph $G$, $d_G(v)$ denotes the degree of a vertex $v$ in $G$. We denote the maximum degree of a graph $G$ by $\Delta(G)$. Complete graph on $n$ vertices is denoted by $K_n$. For a subset $S$ of vertices in $G$, by $G[S]$ we mean the subgraph of $G$ induced on the elements of $S$. By a block in $G$ we mean a maximal subgraph which is either isomorphic to $K_2$ or is $2$-connected. For a vertex $v$ of $G$, define $G-v=G[V(G)\setminus \{v\}]$. A subset of vertices $S$ in a graph $G$ is called independent if the vertices of $S$ do not induce any edge. The girth of a graph $G$ is the length of the shortest cycle in $G$. The radius of $G$ is the minimum eccentricity of any vertex in $G$, where eccentricity is the maximum distance from a vertex to any other vertex. A proper vertex coloring of a graph $G$ is an assignment of colors $c:V(G)\rightarrow \mathbb{N}$ such that no two adjacent vertices receive same colors. For each vertex $v$, the color of $v$ is denoted by $c(v)$. By a color class we mean a subset of vertices having a same color. The chromatic number $\chi(G)$, is the smallest number of colors used in a proper vertex coloring of $G$. We refer to \cite{BM} for the concepts not defined here. In a proper vertex coloring $c$ of $G$, a vertex $u$ is called b-vertex if $u$ has a neighbor of color $j$ for each color $j\not= c(u)$. A proper coloring $c$ is b-coloring if each color class contains a b-vertex. The maximum number of colors in a b-coloring of $G$ is called b-chromatic number and denoted by ${\rm b}(G)$. This quantity is also denoted by $\chi_b(G)$ and $\varphi(G)$. Clearly, ${\rm b}(G)\leq \Delta(G)+1$. The literature is full of papers concerning the b-coloring of graphs e.g. the survey paper \cite{JP} and \cite{BSSV,CLS,DFPRZ,KTV,MS,Z3}. Also to determine ${\rm b}(G)$ is $\NP$-complete for complement of bipartite graphs \cite{BSSV}, for bipartite graphs \cite{KTV}. By a Grundy-coloring (First-Fit coloring) of $G$ we mean a coloring using say $k$ colors such that for each $i,j\in \{1, 2, \ldots, k\}$ with $i<j$, each vertex of color $j$ in $G$ has a neighbor of color $i$. The maximum $k$ satisfying this property is called the Grundy number (or First-Fit chromatic number) of $G$ and denoted by $\Gamma(G)$ also by $\chi_{_{\sf FF}}(G)$. Many researches are devoted to simultaneous or comparative studies of the Grundy and b-chromatic numbers e.g. \cite{SH,Z0}.

\noindent In a proper vertex coloring $c$ of $G$, a vertex is called nice vertex (\cite{Z1}) if for each $j\not= c(u)$, $u$ has a neighbor which is b-vertex of color $j$. A coloring $c$ is called b$^{\ast}$-coloring if there exists a nice vertex in $(G,c)$ \cite{Z2}. It is called ${\rm b}^{\ast}$-coloring because the subgraph induced on b-vertices contains a star graph $K_{1,k-1}$ as subgraph, where the nice vertex is at the center. The ${\rm b}^{\ast}$-chromatic number ${\rm b}^{\ast}(G)$ of $G$ is the maximum number of colors used in a ${\rm b}^{\ast}$-coloring of $G$. It was proved in \cite{Z1,Z2} that there exists an $\mathcal{O}(nm)$ algorithm such that given a graph $G$ on $n$ vertices and $m$ edges, the algorithm provides a ${\rm b}^{\ast}$-coloring in $G$. ${\rm b}^{\ast}$-colorings are useful to obtain bounds for the b-chromatic number of graphs \cite{Z3}. Note that unlike the b-chromatic number, ${\rm b}^{\ast}(G)=\max {\rm b}^{\ast}(H)$, where the maximum is taken over all connected components $H$ of $G$. For a graph $G$, define $G\vee K_1$ as a graph obtained by adding a vertex $v$ to $G$ and joining $v$ to all the vertices of $G$. It was proved in \cite{Z2} that ${\rm b}(G)={\rm b}^{\ast}(G\vee K_1)-1$. A proper coloring $c$ is called z-coloring in \cite{Z1,Z2} if $c$ is a Grundy-coloring using say $k$ colors such that $c$ contains a nice vertex of color $k$. Denote by z$(G)$ the maximum number of colors in a z-coloring of $G$. Define $m^{\ast}$ as the largest integer $k$ such that a vertex of degree at least $k$ has $k$ neighbors in $G$ of degrees at least $k$. It was proved in \cite{Z2} that $z(G)\leq {\rm b}^{\ast}(G) \leq m^{\ast}(G)+1\leq \Delta(G)+1$. A graph $G$ is ${\rm b}^{\ast}$-monotonic if ${\rm b}{\ast}^(H_2)\leq {\rm b}^{\ast}(H_1)$ for every induced subgraph $H_1$ of $G$ and every induced subgraph $H_2$ of $H_1$. Some ${\rm b}^{\ast}$-monotonic and $z$-monotonic families of graphs were obtained in \cite{Z2}. We will use the concept of $z$-atoms from \cite{Z1}. Let $H$ and $G$ be two graphs and $c$ a proper coloring of $H$. We say $(H,c)$ is embedded in $G$ if $H$ is a subgraph of $G$ and for each $u,w\in V(H)$ if $c(u)=c(w)$ then the copies of $u$ and $w$ in $G$ are not adjacent. It was proved in \cite{Z1} that for each integer $k\geq 1$, there exists a family of graphs (called $k$-$z$-atoms) such that for any graph $G$ if $z(G)\geq k$ then there exists a graph $H$ from the family of $k$-$z$-atoms and a canonical $z$-coloring $c$ of $H$ using $k$ colors such that $(H,c)$ is embedded in $G$. The converse of this fact does not hold \cite{Z1}.

\noindent The b-chromatic number of graphs of high girth has been research subject of many papers e.g. \cite{CLS,DFPRZ,MS}. In a graph $G$ with non-increasing degree sequence $d_1, \ldots, d_n$, define $m(G)=\max \{i:d_i\geq i-1\}$. Obviously, $b(G)\leq m(G)$. It was proved in \cite{CLS} that if $G$ is a graph of girth at least 7, then ${\rm b}(G) \geq m(G)-1$. A parameter related to ${\rm b}^{\ast}$-coloring analogous to $m(G)$ is $m^{\ast}$ (\cite{Z2}) which we defined earlier. In Theorem \ref{mainthm} we prove ${\rm b}^{\ast}(G)=m^{\ast}(G)+1$ for graphs of girth at least $7$. Note that this fact does not hold for graphs of girth $5$. Since for Petersen graph $H$, it can be easily observed that ${\rm b}^{\ast}(H)=3$ but $m^{\ast}(H)+1=4$.

\noindent Our proofs in this paper are based on the list-coloring techniques. Let $L$ be an assignment of colors to the vertices of $G$. We say $G$ is $L$-list-colorable (or shortly $L$-colorable) if there exists a proper coloring $c$ of $G$ such that $c(v)\in L(v)$, for each $v\in G$. A strong result of Erd{\rm \H{o}}s, Rubin and Taylor \cite{ERT} asserts that if for each $v\in G$, $|L(v)|=d_G(v)$ then $G$ is $L$-list-colorable unless each block in $G$ is either a complete graph or an odd cycle. This important result was also proved by Borodin in \cite{B} and reported almost recently in \cite{CR}. We add two more results in this regard which will be used in the proofs.

\begin{prop}

\noindent (i) Let $G$ be a connected graph and $L$ a list assignment to the vertices of $G$ such that for each $u\in V(G)$, $|L(u)|=d_G(u)$. If $G$ is not $L$-colorable. then every block in $G$ is isomorphic to a complete graph or an odd cycle.

\noindent (ii) Let $G$ and $L$ be as in $(i)$, if there exists a vertex $u\in G$ such that $|L(u)|>d_G(u)$, then $G$ is $L$-colorable.

\noindent (iii) With the assumptions of $(i)$, if $uw$ is an edge in $G$ such that $u$ is not cut-vertex, then $L(u)\subseteq L(w)$. If $u$ and $w$ are not cut-vertex then $L(u)=L(w)$.
\label{prop}
\end{prop}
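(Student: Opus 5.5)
Part (i) is the theorem of Erd\H{o}s, Rubin and Taylor \cite{ERT} (and of Borodin \cite{B}) quoted just above. It says that if $|L(v)|=d_G(v)$ for every $v$ and $G$ is not $L$-colorable, then every block of $G$ is complete or an odd cycle. Part (i) is exactly this statement, so we simply cite it.

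For (ii) we use a greedy argument. Let $u$ be the vertex with $|L(u)|>d_G(u)$. Since $G$ is connected, we fix a BFS (or spanning) tree rooted at $u$ and order the vertices $v_1,\ldots,v_n=u$ so that distance to $u$ is non-increasing along the order. Then every $v_i\neq u$ has a neighbour (its parent) appearing later in the order. We color greedily in this order. When $v_i\neq u$ is colored, at most $d_G(v_i)-1$ of its neighbours are already colored, and $|L(v_i)|= d_G(v_i)$, so a color is always available. When $u$ is colored last, at most $d_G(u)<|L(u)|$ of its colors are forbidden, so again a color remains. Hence $G$ is $L$-colorable.

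For (iii), suppose $G$ is not $L$-colorable and $uw$ is an edge with $u$ not a cut-vertex; then $G-u$ is connected. Suppose for contradiction that some color $a\in L(u)\setminus L(w)$ exists. We color $u$ with $a$ and, in $G-u$, set $L'(v)=L(v)\setminus\{a\}$ for each neighbour $v$ of $u$ and $L'(v)=L(v)$ otherwise. Every vertex $v$ of $G-u$ then satisfies $|L'(v)|\ge d_{G-u}(v)$, because each neighbour of $u$ loses one from its degree and at most one from its list. For $w$ the list does not shrink, since $a\notin L(w)$, while its degree drops by one, so $|L'(w)|>d_{G-u}(w)$. Trimming lists to size exactly $d_{G-u}(v)$ for $v\ne w$ if needed, part (ii) applied to the connected graph $G-u$ gives an $L'$-coloring. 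Together with $c(u)=a$ this is an $L$-coloring of $G$, a contradiction. Thus $L(u)\subseteq L(w)$.

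If neither $u$ nor $w$ is a cut-vertex, applying the same argument with the roles swapped gives $L(w)\subseteq L(u)$, so $L(u)=L(w)$.

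The only point needing care is that $G-u$ is connected, so that (ii) applies to a single connected graph. This is exactly the hypothesis that $u$ is not a cut-vertex. If $G$ is just $K_2$, then $G-u$ is a single vertex $w$ with $d_{G-u}(w)=0<|L'(w)|$, which is still fine.
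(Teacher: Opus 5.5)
Your proposal is correct and follows the paper's proof essentially step for step. Part (i) is cited from Erd\H{o}s--Rubin--Taylor and Borodin, part (ii) is the same greedy coloring in non-increasing order of distance from $u$, and part (iii) uses the same argument: color $u$ with some $a\in L(u)\setminus L(w)$, delete $a$ from the lists of the neighbours of $u$, and apply (ii) to the connected graph $G-u$ with $w$ as the vertex having a surplus. Trimming the lists to size exactly $d_{G-u}(v)$ is a small extra bit of care the paper skips; the paper in effect uses (ii) under the weaker hypothesis $|L(v)|\ge d_G(v)$, which your greedy argument also covers.
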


\noindent \begin{proof}
Part $(i)$ was proved in \cite{B, ERT}. To prove $(ii)$, arrange the vertices of $G$ in non-increasing form in terms of their distance from $u$. For each vertex $w\not=u$,there are at most $d_G(w)-1$ colors used in the previously scanned neighbors of $w$. Assign an available color from $L(w)$ to vertex $w$. Finally, in order to assign a color to $u$, since $|L(u)|>d_G(u)$ then an available color exists for $u$ in $L(u)$.

\noindent To prove $(iii)$, assume to the contrary that $\alpha \in L(x)\setminus L(y)$. Graph $G-x$ is connected. Define a new list for each $u\in G-x$ as $L'(u)=L(u)$ if $u\in N_G(x)$. And $L'(u)=L(u)\setminus \{\alpha\}$, if $u\in N_G(x)$. For each $u\in G-x$, we have $|L'(u)|\geq d_{G-x}(u)$. For $y$ we have $L'(y)=L(y)$ and $|L'(y)|> d_{G-x}(y)$. If follows by $(ii)$ that $G-x$ is $L'$-colorable. By assigning color $\alpha$ to $x$ we obtain an $L$-coloring of $G$, a contradiction. Then $L(x)\subseteq L(y)$. If in addition, $y$ is not cut-vertex by repeating the same argument for $y$ we obtain $L(y)\subseteq L(x)$. Then $L(x)=L(y)$, as desired.
\end{proof}

\noindent The following lemma concerning the list colorability of cycles will be used in the proofs.

\begin{lemma}
Let $G$ be a graph isomorphic to a cycle. Let $L$ be an assignment of lists such that for each $u$, $|L(u)|=2$. Then $G$ is $L$-colorable unless $G$ is odd cycle and for each $u$ and $w$, $L(u)=L(w)$.\label{lem}
\end{lemma}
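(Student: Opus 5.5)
Since a cycle is connected and every vertex has degree $2$, the hypothesis $|L(u)|=2$ says exactly that $|L(u)|=d_G(u)$ for every $u$. I would therefore reduce the statement to Proposition \ref{prop}, and not work with the cycle structure directly.

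The plan is as follows. Suppose $G$ is not $L$-colorable. By Proposition \ref{prop}(i), every block of $G$ is a complete graph or an odd cycle. A cycle is $2$-connected, so it is its own unique block. A cycle $C_n$ with $n\ge 4$ is not complete, and $C_3=K_3$ is an odd cycle. Hence $G$ must be an odd cycle, and in particular even cycles are always $L$-colorable.

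Next we show that all lists coincide. Since $G$ is $2$-connected, no vertex is a cut-vertex. Applying Proposition \ref{prop}(iii) to each edge $uw$, both endpoints are non-cut-vertices, so $L(u)=L(w)$. Because $G$ is connected, walking around the cycle gives $L(u)=L(w)$ for all $u,w$. This proves the ``unless'' direction.

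For completeness I would also note the converse: an odd cycle in which every vertex has the same $2$-element list $\{a,b\}$ is not $L$-colorable, since such a coloring would be a proper $2$-coloring of an odd cycle. That converse is not strictly needed for the lemma as stated.

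The only delicate point is checking that the hypotheses of Proposition \ref{prop}(iii) are met, namely that $G$ is not $L$-colorable and that the relevant vertices are not cut-vertices; both are immediate for a cycle. As a fallback that avoids the proposition, I could argue directly. If two adjacent vertices $u,w$ have $L(u)\neq L(w)$, choose $\alpha\in L(u)\setminus L(w)$, color $u$ with $\alpha$, and greedily color the path $G-u$ starting from the neighbor of $u$ other than $w$ and ending at $w$. Each vertex has at most one previously colored neighbor, and $w$ has two colored neighbors but $\alpha\notin L(w)$, so a color remains available at every step. If all lists are equal and $n$ is even, alternate the two colors.
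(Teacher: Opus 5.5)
Your proof is correct. Your fallback argument is essentially the paper's proof. The paper labels the cycle $y_1,\ldots,y_k$ with $L(y_1)\neq L(y_k)$, colors $y_1$ with some $r\in L(y_1)\setminus L(y_k)$, colors $y_2,\ldots,y_k$ greedily, and uses $r\notin L(y_k)$ to finish at $y_k$. It then observes that with identical lists the question is just $2$-colorability of the cycle. Your greedy step (``at most one previously colored neighbor'') is the correct form of the paper's slightly garbled ``$s\in L(y_{j+1})\setminus L(y_j)$''.

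Your main route is different. You treat the lemma as the $2$-regular, $2$-connected special case of Proposition~\ref{prop}: (i) forces the single block to be an odd cycle, and (iii), applied at non-cut-vertices, forces equal lists on every edge. This is valid and shows the lemma is just a special case of degree-choosability. However, it leans on the Erd\H{o}s--Rubin--Taylor/Borodin theorem for a fact that needs none of it.

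You could drop (i) entirely. Part (iii) alone gives identical lists whenever $G$ is not $L$-colorable, and alternating the two colors then rules out even cycles, so oddness follows. Moreover, (iii) specialized to a cycle is exactly the paper's greedy argument: give $x$ a color $\alpha\in L(x)\setminus L(y)$, then color the path $G-x$ with $\alpha$ removed from the lists of the neighbors of $x$. So the two routes coincide at the elementary level. The paper's direct version has the merit of being self-contained and constructive.
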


\noindent \begin{proof}
Let $V(G)=\{y_1, \ldots, y_k\}$. Let also $L(y_1)\not=L(y_k)$. Assign a color $r\in L(y_1)\setminus L(y_k)$ to $y_1$. Suppose that for $j\geq 1$, colors of $y_1, \ldots, y_j$ have been determined. There exists a color $s$ in $L(y_{j+1})\setminus L(y_j)$. Assign $s$ to $y_{j+1}$. At last, since $r\not\in L(y_k)$ then there exists a color say $t$ in $L(y_k)$ other than $r$ and $L(y_{k-1})$. Assign $t$ to $y_k$ to obtain an $L$-list-coloring of $G$. It follows that if $G$ is not $L$-colorable then all lists are identical. But in case that all lists are identical, $G$ is $L$-colorable if and only if $G$ is even cycle. This completes the proof.
\end{proof}

\section{Results}

\noindent The first result concerns the ${\rm b}^{\ast}$-chromatic number of graphs with girth at least $7$.

\begin{thm}
Let $G$ be a graph of girth at least 7. Then ${\rm b}^{\ast}(G)=m^{\ast}(G)+1$.\label{mainthm}
\end{thm}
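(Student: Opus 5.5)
Since ${\rm b}^{\ast}(G)\le m^{\ast}(G)+1$ is already known from \cite{Z2}, only the lower bound needs work. Put $k=m^{\ast}(G)$. I will build a proper coloring with colors $\{1,\ldots,k+1\}$ that has a nice vertex.

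\textbf{Setup.} Choose a vertex $v$ with $d_G(v)\ge k$ that has neighbors $x_1,\ldots,x_k$, each of degree at least $k$. For every $i$, choose $k-1$ neighbors of $x_i$ other than $v$, and call this set $Y_i$. Because the girth is at least $7$, the following hold:
\begin{itemize}
\item the $x_i$ are pairwise nonadjacent;
\item the sets $Y_i$ are pairwise disjoint and do not contain $v$ or any $x_j$;
\item no vertex of $Y_i$ is adjacent to $v$ or to any $x_j$ with $j\ne i$;
\item there is no edge inside any $Y_i$;
\item no vertex outside $T$ has two neighbors in $T$, where $T$ denotes the tree formed by $v$, the $x_i$ and the $Y_i$.
\end{itemize}
These give two consequences. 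First, $T$ is an induced subgraph. Second, the sets $Y_i\cup\{x_i\}$ have distance at least $2$ from one another, except through $v$. The only edges allowed among the chosen vertices outside $T$ are edges between $Y_i$ and $Y_j$ with $i\ne j$. Such an edge creates the $6$-cycle $v x_i y y' x_j v$, which girth at least $7$ rules out. So $T$ is induced in $G$.

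\textbf{Coloring $T$.} Give $v$ color $k+1$ and $x_i$ color $i$. Give $Y_i$ the $k-1$ colors of $\{1,\ldots,k\}\setminus\{i\}$ bijectively. Then every $x_i$ sees $v$ (color $k+1$) and all colors $j\ne i$ in $Y_i$, so each $x_i$ is a b-vertex. Also $v$ sees a b-vertex of every color $1,\ldots,k$, so $v$ is nice. The vertex $v$ itself needs to be a b-vertex of color $k+1$ for the $x_i$'s role to matter; this holds because the $x_i$ supply colors $1,\ldots,k$.

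\textbf{Extending to $G$.} Every vertex of $G-T$ must get a color from $\{1,\ldots,k+1\}$ different from its colored neighbors, and this is the main obstacle. Greedy extension fails when a vertex $w\notin T$ has degree at least $k+1$ in $G$. Since $T$ is fixed, colors in $T$ never need to change. A vertex $w\notin T$ has at most one neighbor in $T$, because two neighbors would close a cycle of length at most $6$ inside $T$ plus $w$ (the diameter of $T$ is $4$). So $w$ loses at most one color and has a list of at least $k$ available colors. Vertices of degree at most $k$ are handled by coloring them last, greedily, after all other vertices. The real issue is the set $H$ of vertices outside $T$ with degree at least $k+1$.

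\textbf{Handling $H$ via maximality of $k$.} Here I use $m^{\ast}(G)=k$: no vertex of degree at least $k+1$ has $k+1$ neighbors of degree at least $k+1$. So each $w\in H$ has at most $k$ neighbors in $H\cup(\text{high-degree vertices of }T)$. Its list $L(w)$ of colors not used by its $T$-neighbor has size at least $k$ (size $k+1$ if $w$ has no $T$-neighbor). I will apply Proposition~\ref{prop} to each component of $G[H]$, with lists reduced to size exactly the degree in $G[H]$ wherever possible. Proposition~\ref{prop}(ii) colors any component in which some vertex has slack. The tight case is a component where every vertex has exactly $k$ neighbors in $H$ plus one $T$-neighbor. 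Then Proposition~\ref{prop}(i),(iii) forces every block to be a complete graph or an odd cycle, with equal lists on non-cut vertices. Girth at least $7$ excludes complete blocks $K_r$ with $r\ge3$. So the blocks are edges or odd cycles of length at least $7$, and lists agree along them. To break the obstruction, I would change the color of one leaf of $T$ adjacent to a tight component: each leaf $y\in Y_i$ may take any color in $\{1,\ldots,k\}\setminus\{i\}$ as long as $Y_i$ keeps all those colors. Alternatively I would swap two leaves of $Y_i$, which alters the lists of their neighbors in $H$ and destroys the equality of lists required by Proposition~\ref{prop}(iii); Lemma~\ref{lem} handles the cycle blocks. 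Once $H$ is colored, the vertices of degree at most $k$ are colored greedily, which completes a ${\rm b}^{\ast}$-coloring with $k+1$ colors.
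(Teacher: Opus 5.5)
You build the same tree $T$ and coloring as the paper, note that every vertex outside $T$ has at most one neighbor in $T$, get $\Delta(G[H])\le k$ from the maximality of $m^{\ast}$ with lists of size at least $k$, and finish greedily on vertices of degree at most $k$. All of this matches the paper's proof. The gap is at the decisive step. You reach a tight component $Q$ of $G[H]$: connected, $k$-regular, each vertex with exactly one $T$-neighbor, and blocks equal to $K_2$ or odd cycles of length at least $7$. There you do not derive a contradiction; you propose to recolor or swap leaves of $T$ to ``break the obstruction''. That is not an argument, and in some cases the move is not even available:
\begin{itemize}
\item for $k=2$ each $Y_i$ is a single leaf whose color is forced, and for $k=1$ the $Y_i$ are empty;
\item for larger $k$, changing a leaf's color changes the lists of all its neighbors in $H$, possibly in other components, and nothing you say prevents new non-colorable tight components from appearing;
\item appealing to Lemma~\ref{lem} ``for the cycle blocks'' does not supply such a reason.
\end{itemize}

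The missing idea is that a tight component which is not $L$-colorable can never exist, so the coloring of $T$ never has to change.

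\emph{Case $k\ge 3$.} If $Q$ is a single block, it is $K_2$ or a cycle, so $\Delta(Q)\le 2<k$. Otherwise take an end-block $B$ and a non-cut vertex $z$ of $B$. All edges at $z$ lie in $B$, so $d_Q(z)=d_B(z)\le 2<k$, contradicting $k$-regularity.

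\emph{Case $k=2$.} Here $Q$ is an odd cycle with lists of size $2$. By Lemma~\ref{lem} all lists coincide, so the unique $T$-neighbors $p(y)$, $y\in V(Q)$, all have the same color. In the path $T=y_1x_1vx_2y_2$, colored $2,1,3,2,1$, two vertices of equal color are either equal or at distance exactly $3$. Hence for an edge $xy$ of $Q$, the vertices $x,y,p(x),p(y)$ close either a triangle or a $6$-cycle, contradicting girth at least $7$.

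\emph{Case $k\le 1$.} This is trivial; for instance $m^{\ast}(G)=1$ forces $G$ to be a forest.

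Replacing your recoloring step with these observations turns your outline into the paper's proof.
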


\noindent \begin{proof}
It is enough to prove ${\rm b}^{\ast}(G)\geq m^{\ast}(G)+1$. Write $m=m^{\ast}(G)$ and $k={\rm b}^{\ast}(G)$. The inequality holds for $m=0$ or $m=1$. Assume hereafter that $m\geq 2$. There exist $u$ and $m$ neighbors $w_1, \ldots, w_m$ of $u$ such that $d_G(w_i)\geq m$. For each $i\in \{1, \ldots, m\}$, take a subset $A_i\subset N(w_i)- u$ such that $|A_i|=m-1$. Define the following
$$P=\{u\}\bigcup \{w_1, \ldots, w_m\} \bigcup (\bigcup_i A_i).$$
\noindent Since the girth is at least $7$ then $A_i\cap A_j=\varnothing$ and $A_1\cup \cdots \cup A_m$ is an independent set in $G$. It follows that $G[P]$ is isomorphic to an induced tree. Define a coloring $c$ on $G[P]$ using colors from $C=\{1, \ldots, m+1\}$. Define $c(u)=m+1$, $c(w_i)=i$ and for each $1\leq i \leq m$, assign bijectively all colors in $\{1, \ldots, m\}\setminus \{i\}$ to the vertices of $A_i$. Coloring $c$ is proper on $G[P]$ and all colors in $\{1, \ldots, m+1\}\setminus \{i\}$ appear in $N_{P}(w_i)$, for each $i$. It follows that $w_i$ is a b-vertex of color $i$ and then $u$ is a nice vertex of color $m+1$. In the rest of the proof we first show that $c$ can be extended to a proper coloring of whole $G$ using the colors $1, \ldots, m+1$.

\noindent {\bf Claim 1:} For each $x\not\in V(P)$, $|N_G(x)\cap P|\leq 1$.

\noindent {\bf Proof of Claim 1:} Assume to the contrary that a vertex $x\not\in V(P)$ has two neighbors $v_1, v_2$ in $P$. The distance of $v_1$ and $v_2$ in the tree $G[P]$ is at most $4$. Then the path between $v_1$ and $v_2$ in $G[P]$ together with the edges $xv_1$ and $xv_2$ form a cycle of length at most $6$, a contradiction. Hence the claim holds. Now, define $D=\{x\in V(G)\setminus P: d_G(x)\geq m+1\}$.

\noindent {\bf Claim 2:} $\Delta(G[D])\leq m$.

\noindent {\bf Proof of Claim 2:} Otherwise, let $x\in D$ be such that $d_{G[D]}(x)\geq m+1$. Then there exist $x_1, \ldots, x_{m+1}$ from $N_G(x)\cap D$. We have $d_G(x_i)\geq m+1$, for each $1\leq i\leq m+1$. This means that $x$ has $m+1$ neighbors each of them of degree at least $m+1$. This contradicts the definition of $m^{\ast}$. This proves Claim {\bf $2$}.

\noindent Define the following list to each vertex $x\in D$.

\noindent $L(x)=C\setminus \{c(p): p\in N_G(x)\cap P\}$.

\noindent By Claim 1 we have $|L(x)|\geq m$.

\noindent {\bf Claim 3:} $G[D]$ is $L$-colorable.

\noindent {\bf Proof of Claim 3:} Assume to the contrary that $Y\subseteq D$ is non-empty and minimal such that $G[y]$ is not $L$-colorable. Since $Y$ is minimal then $G[Y]$ is connected. Also, for each $y\in Y$, $G[Y\setminus y]$ is $L$-colorable. If for some $y\in Y$, $d_{G[Y]}(y)<|L(y)|$ then using Proposition \ref{prop} $(ii)$ we obtain that $G[Y]$ is $L$-list-coloring, a contradiction.
%after $L$-list-coloring of $G[Y\setminus y]$, at least one color is not appeared in the neighborhood of $y$. By assigning that color to $y$ we obtain an $L$-list-coloring of $G[Y]$, a contradiction.
It follows that for each $y\in Y$, $d_{G[Y]}(y)\geq |L(y)|$. Using the latter fact, Claim 2 and $|L(x)|\geq m$ (which holds for all $x\in D$), we obtain $m\geq d_{G[Y]}(y)\geq |L(y)| \geq m$. Hence,
$$\forall y\in Y, d_{G[Y]}(y)= m.$$
\noindent It follows that $G[Y]$ is $m$-regular connected graph. Also $d_{G[Y]}(y)=m$ implies $|L(y)|=m$ which shows that every vertex $y\in Y$ has exactly one neighbor in $P$. Denote the very unique neighbor of $y$ by $p(y)$. We bifurcate the proof in terms of $m$.

\noindent {\bf Case 1:} $m\geq 3$

\noindent Consider $G[Y]$ with lists satisfying $d_{G[Y]}(y)=|L(y)|$, for each $y$. Apply Proposition \ref{prop} for $G[Y]$ since $G[Y]$ is connected and not list-colorable. It implies that each block in $G[Y]$ should be either a complete graph or an odd cycle. The condition on girth implies that each block is isomorphic to either $K_2$ or an odd cycle of length at least $7$. If $G[Y]$ contains exactly one block then $\Delta(G[Y])\leq 2$, which contradicts $m\geq 3$. Hence, assume that $G[Y]$ contains more than one block and then one of them say $B$ is an end-block. Block $B$ contains exactly one cut-vertex from $G[Y]$. Let $z$ be a non-cut-vertex in $B$. Then all edges incident to $z$ belong to $B$. Now, if $B$ is isomorphic to $K_2$ then $d_B(z)=d_{G[Y]}(z)=1$ and if $B$ is isomorphic to an odd cycle then $d_B(z)=d_{G[Y]}(z)=2$. Both cases contradicts the fact that $d_{G[Y]}(z)=m\geq 3$. This proves Claim 3 when the case 1 holds.

\noindent {\bf Case 2:} $m=2$

\noindent Using the latter argument for the structure of $G[Y]$, the regularity and minimality of $G[Y]$ shows that it is $2$-regular and then isomorphic to a cycle. Let $V(G[Y])=\{y_1, \ldots, y_r\}$. We have $|L(y_i)|=2$ and $L(y_i)\subseteq \{1,2,3\}$. Apply Lemma \ref{lem} for $G[Y]$ and obtain that $G[Y]$ is list-colorable unless $L(y_i)=L(y_j)$. It follows that for each $y_i$ and $y_j$, $p(y_i)=p(y_j)$ (recall that $p(y)$ is the unique neighbor of $y$ in $P$). Since $m=2$ then $G[P]$ consists of an induced path consisting of $a_1, w_1, u, w_2, a_2$, where $c(u)=3$, $c(w_1)=c(a_2)=1$ and $c(w_2)=c(a_1)=2$. Then no two vertices of same color has distance less than $3$. Take an edge $xy$ from $G[Y]$. If $p(x)=p(y)$ then a triangle is formed on $x, y, p(x)$, a contradiction. But if $p(x)\not=p(y)$ then since $p(x)$ and $p(y)$ have identical colors then their distance in $P$ is $3$ and hence they together with the edges $xp(x), xy, yp(y)$ form a cycle of length $6$, contradiction. This argument rules out the non-list-colorability in case $m=2$. Claim $3$ holds for the case $2$ and its proof is completed.

\noindent We conclude that $G[D]$ is list-colorable. It follows that the partial ${\rm b}^{\ast}$-coloring of $P$ is extended to a proper coloring (yet denoted by $c$) of $G[P\cup D]$ using $m+1$ colors.

\noindent To complete the proof we have a final simple step. We have a partial ${\rm b}^{\ast}$-coloring $c$ of $G[P\cup D]$ with $m+1$ colors. We prove that $c$ can be extended to a proper coloring of $R=V(G)\setminus (P\cup D)$ using $m+1$ colors. Consider an arbitrary ordering $\sigma$ of vertices in $R$. Apply a modified greedy coloring of $R$ with respect to $\sigma$. At each step of the coloring procedure, to color a vertex say $w$ of $R$, assign a smallest number which is not appeared in $N_{P\cup D}(w)$ and also in those vertex in $N_R(w)$ which have been previously colored. The procedure finishes the coloring of $R$ using $1, \ldots, m+1$. Otherwise, let it finishes the coloring process at a maximal subgraph $S$ of $R$. Let $z\in R\setminus S$. Since $d_G(z)\leq m$ then there is an admissible color available for $z$. By assigning this color to $z$ we obtain a proper coloring of $S\cup \{z\}$ using $m+1$ colors. This contradiction shows that $c$ can be extended to entire $R$, as desired.
\end{proof}

\noindent The following result is easily obtained.

\begin{cor}
Let $G$ be a graph of girth at least 7. Then $G$ is ${\rm b}^{\ast}$-monotonic.
\end{cor}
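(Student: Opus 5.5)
The plan is to reduce ${\rm b}^{\ast}$-monotonicity to a comparison of the parameter $m^{\ast}$ on nested induced subgraphs, using Theorem \ref{mainthm}. Let $H_1$ be an induced subgraph of $G$ and $H_2$ an induced subgraph of $H_1$. The first step is the girth observation: every cycle of an induced subgraph of $G$ is a cycle of $G$, so both $H_1$ and $H_2$ have girth at least $7$. An acyclic subgraph has infinite girth, which is also at least $7$, and Theorem \ref{mainthm} applies to it as well, since its proof only uses the absence of short cycles. Theorem \ref{mainthm} therefore gives ${\rm b}^{\ast}(H_1)=m^{\ast}(H_1)+1$ and ${\rm b}^{\ast}(H_2)=m^{\ast}(H_2)+1$. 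It remains to show $m^{\ast}(H_2)\leq m^{\ast}(H_1)$.

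For this second step we show more generally that $m^{\ast}$ is monotone under taking subgraphs, since degrees can only drop. Let $k=m^{\ast}(H_2)$. Then there is a vertex $v\in V(H_2)$ with $d_{H_2}(v)\geq k$ and $k$ neighbours $x_1,\ldots,x_k$ of $v$ in $H_2$ with $d_{H_2}(x_i)\geq k$. Because $H_2$ is a subgraph of $H_1$, each $x_i$ is also a neighbour of $v$ in $H_1$. Moreover $d_{H_1}(v)\geq d_{H_2}(v)\geq k$ and $d_{H_1}(x_i)\geq d_{H_2}(x_i)\geq k$. Hence $k$ is among the integers in the definition of $m^{\ast}(H_1)$, and so $m^{\ast}(H_1)\geq k$.

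Combining the two steps gives ${\rm b}^{\ast}(H_2)=m^{\ast}(H_2)+1\leq m^{\ast}(H_1)+1={\rm b}^{\ast}(H_1)$, which is exactly the ${\rm b}^{\ast}$-monotonicity of $G$.

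There is no serious obstacle. The only points to check are the degenerate cases. For the empty graph and for forests one must either check that the proof of Theorem \ref{mainthm} goes through, which it does since the small cases $m\leq 1$ are handled there directly, or verify those cases directly. One should also note that $m^{\ast}$ is defined via a maximum over a set that contains $0$ whenever the graph is non-empty, so the comparison above is well defined.
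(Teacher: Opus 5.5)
Your proof is correct and follows essentially the same route as the paper. It applies Theorem \ref{mainthm} to $H_1$ and $H_2$, both of girth at least $7$, and then shows $m^{\ast}(H_2)\leq m^{\ast}(H_1)$, since adjacencies and degrees in $H_2$ persist or grow in $H_1$.
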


\noindent \begin{proof}
Let $H_1$ be an induced subgraph of $G$ and $H_2$ be an induced subgraph of $H_1$. Both $H_1$ and $H_2$ have girth at least $7$. It follows by Theorem \ref{mainthm} that ${\rm b}^{\ast}(H_1)=m^{\ast}(H_1)+1$ and ${\rm b}^{\ast}(H_2)=m^{\ast}(H_2)+1$. To prove the corollary we prove $m^{\ast}(H_2)\leq m^{\ast}(H_1)$. Let $m^{\ast}(H_2)=r$. It follows that $H_2$ (then $H_1$) contains a vertex $x$ such that $x$ has $r$ neighbors in $H_2$ (then in $H_1$) such that their degrees in $H_2$ (then in $H_1$) is at least $r$. If follows that $m^{\ast}(H_1)\geq r=m^{\ast}(H_2)$. \end{proof}

\noindent The following result involves graphs of girth at least $6$ with $m^{\ast}=3$.

\begin{prop}
Let $G$ be a graph of girth $6$ such that $m^{\ast}(G)=3$. Then ${\rm b}^{\ast}(G)=4$
\end{prop}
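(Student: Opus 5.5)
The plan is to follow the proof of Theorem \ref{mainthm} with $m=3$ and repair the one step that uses girth $7$. Take $u$ with neighbours $w_1,w_2,w_3$ of degree at least $3$. Choose $A_i\subset N(w_i)-u$ with $|A_i|=2$, set $P=\{u\}\cup\{w_i\}\cup\bigcup A_i$, and colour $c(u)=4$, $c(w_i)=i$, with $A_i$ receiving $\{1,2,3\}\setminus\{i\}$ bijectively. Girth $6$ still makes $G[P]$ an induced tree and the $A_i$ pairwise disjoint. Hence $w_i$ is a b-vertex and $u$ is a nice vertex, exactly as before.

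Claims 2 and 3 (Case 1) and the final greedy step on $R$ go through verbatim. Claim 2 only uses $m^{\ast}=3$. The block argument in Case 1 needs only that blocks are $K_2$ or odd cycles, which girth $6$ still guarantees, together with $m\ge 3$. The greedy step uses only $d_G(z)\le 3$ for $z\in R$. What fails is Claim 1. A vertex $x\notin P$ cannot have two neighbours in $P$ at tree-distance at most $3$, since that would give a cycle of length at most $5$. However, two leaves $a\in A_i$ and $a'\in A_j$ with $i\ne j$ are at distance $4$, so $x$ may be adjacent to up to three leaves, one in each $A_i$, closing $6$-cycles. Consequently $|L(x)|$ may drop to $2$ or $1$ for $x\in D$. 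The key lemma I would aim for is therefore:
\begin{quote}
one can choose the bijections on the $A_i$ (and, if necessary, the sets $A_i$ themselves) so that every $x\in D$ still has $|L(x)|\ge 3$, or at least so that the minimal-counterexample argument of Claim 3 still applies.
\end{quote}

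To achieve this, I will use the freedom of $2^3=8$ choices of bijections. Write $A_i=\{a_i,a_i'\}$; the colour pattern on the $A_i$ is fixed by which element of each $A_i$ gets which colour. For a vertex $x$ adjacent to $a\in A_i$ and $b\in A_j$, we want $c(a)=c(b)$. The only colour common to $\{1,2,3\}\setminus\{i\}$ and $\{1,2,3\}\setminus\{j\}$ is the third index $k$. So the requirement becomes a constraint of the form ``$a$ and $b$ both get colour $k$''. These constraints form a small constraint system on three binary variables. Two different constraints between the same pair $A_i,A_j$ that conflict (say $x\sim a_1,a_2$ and $x'\sim a_1',a_2$) would give the cycle $x a_1 w_1 a_1' x' a_2 x$. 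That cycle has length $6$, so girth alone does not forbid it. In that situation the plan is to keep both vertices in the list problem but exploit that they share the neighbour $a_2$.

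When constraints cannot all be satisfied, I fall back on degree counting. Let $x\in D$ have $t\ge 2$ neighbours in $P$. I would like $d_{G[D]}(x)\le 3-(t-1)$ so that $|L(x)|>d_{G[Y]}(x)$ and Proposition \ref{prop}(ii) applies. That inequality is not automatic: the only bound available is $d_{G[D]}(x)\le 3$ from Claim 2, and leaves of $P$ need not have degree at least $4$. So in the minimal non-colourable $Y$ of Claim 3, the vertices with small lists are the obstruction. A cleaner alternative is to re-choose $A_i$ within $N(w_i)-u$ when $d(w_i)\ge 4$, or to swap $P$ with an $x$ that has many leaf neighbours.

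The main obstacle I expect is exactly this step: showing that some choice of bijections (and of $A_i$) leaves, in any minimal non-colourable $Y\subseteq D$, every vertex with $|L(y)|=d_{G[Y]}(y)=3$, i.e.\ exactly one colour of $P$ in its neighbourhood. Once that is established, Case 1 of Claim 3 gives the contradiction via Proposition \ref{prop}(i) and the end-block argument, and the greedy extension to $R$ finishes with ${\rm b}^{\ast}(G)\ge 4=m^{\ast}(G)+1$. Combined with ${\rm b}^{\ast}(G)\le m^{\ast}(G)+1$, this yields ${\rm b}^{\ast}(G)=4$.
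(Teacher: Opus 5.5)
\textbf{The central step is missing.} You correctly find where the girth-$7$ proof breaks: a vertex outside $P$ may be adjacent to leaves in two or three different $A_i$, closing $6$-cycles, so Claim 1 and the bound $|L(x)|\ge m$ are lost. The proposal then stops. The statement you call the key lemma is never proved. The constraint system, the degree-counting fallback and the ``re-choose $A_i$ / swap $P$'' ideas are only suggestions, and you yourself flag the crucial step as ``the main obstacle I expect''.

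Your own configuration $x\sim a_1,a_2$, $x'\sim a_1',a_2$ already shows that choosing bijections cannot achieve the strong form ($|L(x)|\ge 3$ for all $x\in D$). One of $x,x'$ must see two colours. If $d_G(w_1)=d_G(w_2)=3$, the sets $A_1,A_2$ cannot be re-chosen either. Also, your remark that Case 1 of Claim 3 ``goes through verbatim'' does not hold. That case uses $m\ge d_{G[Y]}(y)\ge |L(y)|\ge m$, and the last inequality is exactly what Claim 1 supplied.

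\textbf{The missing idea: tolerate small lists rather than avoid them.} The paper's argument works for an arbitrary choice of bijections. It analyses a minimal non-$L$-colourable $Y$ directly, as follows.
\begin{enumerate}
\item Each $y\in Y$ has $d_G(y)\ge 4$. Hence $\Delta(G[Y])\le 3$ by $m^{\ast}(G)=3$, so every $y$ has a neighbour in $P$.
\item Girth rules out any $y\in Y$ being adjacent to $u$.
\item Let $\sigma(y)$ be the set of branches $B_i=A_i\cup\{w_i\}$ that $y$ touches. Adjacent vertices of $Y$ have disjoint $\sigma$'s, otherwise a cycle of length at most $5$ appears. Consequently no vertex touches all three branches.
\item The vertices touching two branches form an independent set $Z$, since any two $2$-subsets of $\{1,2,3\}$ intersect. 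Colour $4$ lies in every one of their lists, so all of $Z$ can be coloured $4$.
\item Each remaining vertex $x\in X$ has a unique $P$-neighbour $p(x)$, with $|L(x)|=3=d_{G[Y]}(x)$. Delete colour $4$ from the lists of those adjacent to $Z$. Then a non-colourable component $Q$ of $G[X]$ has $|L'|=d_Q$ everywhere and $\delta(Q)\ge 2$.
\item By Proposition~\ref{prop}(i), $Q$ has an odd-cycle end-block of length at least $7$. By Proposition~\ref{prop}(iii), four consecutive non-cut vertices of it have equal lists $\{1,2,3\}\setminus\{c(p(x_j))\}$, hence equal $P$-neighbour colours.
\item By pigeonhole over three branches, two of them lie in the same branch. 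Each colour occurs once in $B_i$, so they share a $P$-neighbour, giving a cycle of length at most $5$.
\end{enumerate}
Supplying this argument in place of your unproved key lemma is what the proof needs.
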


\noindent \begin{proof}
Let $P=\{u\}\cup \{w_1, w_2, w_3\} \cup A_1 \cup A_2 \cup A_3$ be as in the proof of Theorem \ref{mainthm}. The condition on girth implies that $G[P]$ is an induced tree. Define a coloring $c$ on $P$ as $c(u)=4$, $c(w_i)=i$ and such that all colors in $\{1,2,3\}\setminus \{i\}$ appear in $A_i$. In this coloring $u$ is nice vertex of color $4$. For each vertex $v\not\in P$, let $L$ be a list obtained by removing the colors appeared in $N_G(v)\cap P$ from $C=\{1, \ldots, 4\}$. If $G\setminus P$ is $L$-colorable then $c$ can be extended to a proper $4$-coloring of the whole graph. Otherwise, assume to the contrary that $G\setminus P$ is not $L$-colorable. We are going to obtain a contradiction.

\noindent Take a minimal $Y\subseteq V(G)\setminus P$ such that $G[Y]$ is not $L$-colorable. As in the proof of Theorem \ref{mainthm}, $G[Y]$ is connected and $d_{G[Y]}(v)\geq |L(v)|$, for each $v\in Y$. For each $v$, let $s(v)=|N_G(v)\cap P|$ and $r(v)$ be number of distinct colors appeared in $N_G(v)\cap P$. We have $|L(v)|=4-r(v)$ and $s(v)\geq r(v)$. We obtain the following for each $v\in Y$
$$d_G(v)\geq d_{G[Y]}(v)+s(v)\geq |L(v)|+s(v)=4-r(v)+s(v)\geq 4 \Rightarrow d_G(v)\geq 4~~~~~{\bf (1)}$$
\noindent {\bf Claim 1:} $\Delta(G[Y])\leq 3$.

\noindent To prove Claim 1, let $x\in Y$ be a vertex with $4$ neighbors of degree at least $4$ in $Y$. This contradicts $m^{\ast}(G)=3$. Since $|L(v)|\leq \Delta(G[Y])\leq 3$, then each $v\in Y$ has at least one neighbor in $P$.

\noindent {\bf Claim 2:} For each $v\in Y$, $v$ is not adjacent to $u$.

\noindent To prove the claim, let $v\in Y$ and $uv\in E(G)$. Since the girth is at least $6$ then $N_G(v)\cap P=\{u\}$. Then $|L(v)|=3$. We have $d_{G[Y]}(v)\geq 3$. Let $y\in Y$ be adjacent to $v$. Vertex $y$ has a neighbor in $P$. We obtain a cycle of length at most $5$. This contradiction implies that for each $v\in Y$, $N_G(v)\cap \{u\}=\varnothing$.

\noindent For each $v\in Y$, define $\sigma(v)=\{i\in \{1,2,3\}:~N_G(v)\cap B_i \not=\varnothing\}$, where $B_i=A_i\cup \{w_i\}$.

\noindent {\bf Claim 3:} For each edge $xy\in G[Y]$, $\sigma(x)\cap \sigma(y)=\varnothing$.

\noindent To prove Claim 3, let $i\in \sigma(x)\cap \sigma(y)$. Pick $q_x\in N_G(x)\cap B_i$ and $q_y\in N_G(y)\cap B_i$. If $q_x=q_y$ then $x,y,q_x$ form a triangle. If $q_x\not=q_y$ then their distance in $B_i$ is at most $2$. The very path with the edges $xy,xq_x,yq_y$ form a cycle of length $5$. This contradiction proves Claim 3.

\noindent {\bf Claim 4:} For each $v\in G[Y]$, $\sigma(v)\leq 2$.

\noindent To prove Claim 4, assume to the contrary that $\sigma(v)=\{1,2,3\}$. Vertex $v$ is not adjacent to $u$ then $4\in L(v)$ and $|L(v|\geq 1$. By $d_{G[Y]}(v)\geq |L(v)|$, let $w\in Y$ be adjacent to $v$. By Claim 3, $\sigma(w)\cap \{1,2,3\}=\varnothing$, which is impossible since $\sigma(w)\subseteq \{1,2,3\}$. This proves Claim 4.

\noindent For each $i\in \{1,2,3\}$, define $X_i=\{x\in Y: \sigma(x)=\{i\}\}$ and $Z_i=\{x\in Y: \sigma(x)=\{1,2,3\}\setminus \{i\}\}$. Let also $X=X_1\cup X_2\cup X_3$ and $Z=Z_1\cup Z_2\cup Z_3$. We have $V(Y)=X\cup Z$ and $Z$ is independent because any two subsets of $\{1,2,3\}$ of cardinality $2$ intersects. We have also $4\in L(z)$, for each $z\in Z$. The reason is that $z$ is not adjacent to $u$ and all its neighbors in $P$ have colors in $\{1,2,3\}$. Assign color $4$ to all vertices in $Z$. Since $Z$ is independent the assignment is proper. In the following, to get a contradiction we prove that $G[Y][X]$ is $L$-colorable. This will imply that $G[Y]$ is $L$-colorable.

\noindent For each $x\in X_i$, we have $\sigma(x)=i$. Hence, $x$ has a unique neighbor in $P$. Denote the very neighbor by $p(x)$. It follows that $N_G(x)\cap P=\{p(x)\}$. Also $L(x)=\{1, \ldots, 4\}\setminus \{c(p(x))\}$. Then $|L(x)|=3$. It implies that $d_{G[Y]}(x)=3$. For each $x\in X$, define $q(x)=|N_{G[Y]}(x)\cap Z|$. We obtain from $d_{G[Y]}(x)=3$
that
$$d_{G[Y][X]}(x)=3-q(x)~~~~{\bf (2)}$$
\noindent Define a new lists for the elements of $X$ as $L'(x)=L(x)$ if $q(x)=0$ and $L'(x)=L(x)\setminus \{4\}$ if $q(x)\geq 1$. It follows that  $|L'(x)|=3$ if $q(x)=0$ and $|L'(x)|=2$ if $q(x)\geq 1$. By {\bf (2)} we have the following for each $x\in X$
$$|L'(x)|\geq d_{(G[Y])[X]}(x)~~~~{\bf (3)}$$
\noindent Now, let $Q$ be a connected component of $(G[Y])[X]$ which is not $L'$-list-colorable. Then ${\bf (3)}$ holds for each vertex of $Q$. If for some $x\in Q$, $|L'(x)|> d_{Q}(x)$ then by Proposition \ref{prop} $(iii)$, $Q$ should be $L'$-list-colorable. This contradiction shows that
$$\forall v\in Q, |L'(v)|= d_{Q}(v)~~~~{\bf (4)}$$
\noindent From definition of $L'$ and $d_{(G[Y])[X]}(x)=3-q(x)$ we obtain the following. (1) If $q(x)=0$ then $d_Q(x)=3$. (2) If $q(x)\geq 1$ then $2=d_Q(x)=3-q(x)$ and then $q(x)=1$. It follows that for each $v\in Q$, $d_Q(v)\in \{2,3\}$. We obtain
$$\delta(Q)\geq 2~~~~~~~{\bf (5)}$$
\noindent Applying Proposition \ref{prop} $(i)$ for connected graph $Q$, we conclude that each block in $Q$ is either complete subgraph or odd cycle. Since the girth of $Q$ is at least $6$ then each block in $Q$ is either $K_2$ or an odd cycle of length at least $7$. If there is exactly one block in $Q$ then $\delta(Q)$ implies that it is a cycle. Otherwise, let $B$ be an end-block in $Q$. Clearly $B$ should be a cycle block. It follows that $Q$ contains a block isomorphic to an odd cycle of length at least $7$. Let $x_1, \ldots, x_4$ be four consecutive vertices in a block of $Q$ which are not cut-vertex. Each $x_j$ has degree 2 in this block. We have $|L'(x_j)|=d_Q(x_j)=3-q(x_j)=2$. Then $q(x_j)=1$, for each $j\in \{1,2,3,4\}$. Apply Proposition \ref{prop} $(iii)$ for $Q$ and the non-cut-vertex vertices $x_1, \ldots, x_4$, we obtain that
$$\forall i,j\in \{1, \ldots, 4\}, L'(x_i)=L'(x_j)~~~~~~~{\bf (6)}$$
\noindent For each $j$, $x_j\in X$ and $q(x_j)=1$ then $L'(x_j)=\{1,2,3\}\setminus \{c(p(x_j))\}$. Recall that $p(x_j)$ is the unique neighbor of $x_j$ in $P$. These facts together with (6) imply
$$c(p(x_1))=c(p(x_2))=c(p(x_3))=c(p(x_4))~~~~~~~{\bf (7)}$$
\noindent By the pigeonhole principle, there exist $x_r$ and $x_s$ and an index $i$ such that $x_r, x_s\in X_i$. By (7) we have $c(p(x_r))=c(p(x_s))$. On the other hand each color $1,2,3$ appears exactly once in $B_i$. Then $p(x_r)=p(x_s)$. The distance between $x_r$ and $x_s$ is at most $3$. Vertices $x_r$ and $x_s$ are both adjacent to $p(x_r)$. We obtain a cycle of length at most $5$. This contradiction shows that every connected component of $(G[Y])[X]$ is $L'$-list-colorable. It follows that $(G[Y])[X]$ is $L'$-list-colorable. Combination of the coloring of $G[Y][X]$ with a coloring which assigns color $4$ to all vertices of $Z$ we obtain an $L$-list-coloring of entire $G[Y]$. It implies that the ${\rm b}^{\ast}$-coloring of $G[P]$ can be extended to a coloring of $G\setminus V(P)$ using $4$ colors in which $u$ remains a nice vertex of color $4$. This completes the proof.
\end{proof}

\noindent Theorem \ref{z} concerns the $z$-chromatic number of graphs of girth at least $2k+2$, where $k\geq 3$ is fixed integer. As proved in \cite{Z1}, there exists exactly one tree having smallest number of vertices and with $z$-chromatic number $k$. According to \cite{Z1}, this tree (denoted by $R_k$) is a rooted tree on $(k-3)2^{k-1}+k+2$ vertices and admits a canonical $z$-coloring $c$ using $k$ colors such that the color of its root (say $u$) is $k$. It is also known from the structure of $R_k$ that its radius is $k$. The following fact about $R_k$ is used in the following proof. Draw $R_k$ top-down such that $u$ is the most top vertex and its b-neighbors $w_1, \ldots, w_{k-1}$ of colors $1, \ldots, k-1$ are in the first level $L_1$. In the second level $L_2$, there are necessary neighbors of $w_1, \ldots, w_{k-1}$ which make them b-vertex. Let $L_3, \ldots$ be the next and lower levels in $R_k$. For each $i\geq 2$ and each $w\in L_i$ of color say $j$, there are exactly $j-1$ children of $w$ in $L_{i+1}$ and their colors are $1, \ldots j-1$. It follows that the lowest level of $R_k$ consists only of vertices having color $1$.

\noindent Note that in contrast to ${\rm b}^{\ast}$-coloring, no girth condition guarantees that $z(G)=m^{\ast}(G)+1$. To prove this fact consider a rooted tree obtained by a vertex say $u$ (as the root) of degree $k$ such that each neighbor of $u$ has $k-1$ neighbors of degree one in a lower level. Now, connect two leaves from different branches by a path of an arbitrary length. The resulting graph has arbitrary large girth but with $z$-number $3$. We conclude that $R_k$ (or other $z$-atoms \cite{Z1}) and not $m^{\ast}$ is an essential factor for controlling the $z$-chromatic number of graphs.

\begin{thm}

\noindent (i) Let \(k \geq 3\) and \(G\) be a graph of girth at least $2k+2$, where $k=m^{\ast}(G)+1$. If \(G\) contains \(R_k\) as ordinary subgraph, then \(z(G)=k\).

\noindent (ii) Let \(k \geq 3\) and \(G\) be a graph on at least $(k-3)2^{k-1}+k+2$ vertices such that $m^{\ast}(G)=\delta(G)=k-1$ and girth of $G$ is at least $2k+2$. Then $z(G)=k$.\label{z}
\end{thm}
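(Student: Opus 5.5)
Since $z(G)\le {\rm b}^{\ast}(G)\le m^{\ast}(G)+1=k$, both parts reduce to the lower bound $z(G)\ge k$. The plan is to place the canonical $z$-coloring $c$ of $R_k$ on a copy of $R_k$ in $G$, where $c$ uses colors $1,\dots,k$ and gives the root $u$ color $k$. Then we extend $c$ to all of $G$ so that the result is still a Grundy coloring with $k$ colors and $u$ remains a nice vertex of color $k$.

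For (i), let $T$ be the copy of $R_k$ in $G$. The radius of $R_k$ is $k$, so any two vertices of $T$ are at distance at most $2k$ in $T$. Hence a chord between two vertices of $T$ would close a cycle of length at most $2k+1<2k+2$, so $T$ is induced. In the same way, a vertex $x\notin T$ with two neighbours in $T$ would create a cycle of length at most $2k+2$. The equality case needs a closer look: such a cycle arises only from two deepest leaves on opposite sides of the root, and I expect to handle it either by the girth bound being strict in that configuration or by the fact that those leaves have color $1$. So every outside vertex has at most one neighbour in $T$, which is the analogue of Claim 1 in Theorem \ref{mainthm}.

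Next we extend $c$ to $V(G)\setminus V(T)$ so that it is proper, uses colors in $\{1,\dots,k\}$, and is Grundy. The cleanest route is to run First-Fit on the remaining vertices in any order: each such vertex receives the smallest color absent from its already-colored neighbours. A vertex gets a color at most $k$ whenever it has at most $k-1$ colored neighbours, in particular whenever $d_G(v)\le k-1=m^{\ast}$. The Grundy property for these new vertices holds automatically. The vertices of $T$ already satisfy the Grundy condition inside $T$, because $c$ is a canonical $z$-coloring. The nice vertex $u$ keeps its b-neighbours, so it stays nice.

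The obstacle is the outside vertices of large degree, i.e. the set $D$ of vertices of degree at least $k$, where First-Fit could demand color $k+1$. Here I would copy the structure of Theorem \ref{mainthm}. By the definition of $m^{\ast}$, each vertex has fewer than $k$ neighbours of degree at least $k$. So I would first color $D\setminus V(T)$, using lists $L(x)=\{1,\dots,k\}\setminus c(N(x)\cap T)$ of size at least $k-1$, and the argument of Claim 3 (Proposition \ref{prop} together with the girth bound, which forbids complete blocks and short odd cycles) gives a proper coloring. To make this Grundy, I would instead pass to a greedy order: color $D$ first by First-Fit, ordered so that each $x\in D$ is colored after at most $k-1$ of its neighbours. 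This is possible because within $D$ degrees are at most $k-1$, and each $x$ has at most one neighbour in $T$. Then color the low-degree vertices by First-Fit; since they have degree at most $k-1$, they never need color $k+1$. Checking that First-Fit on $D$ never exceeds $k$, given at most $k-1$ neighbours in $D$ plus one in $T$, is the step I expect to be the main difficulty. I would resolve it by a Grundy-style recoloring: any vertex forced to color $k+1$ would have $k$ neighbours of degree at least $k$, contradicting $m^{\ast}=k-1$.

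For (ii), it suffices to find a copy of $R_k$ in $G$ and apply (i). Since $\delta(G)=k-1$, every vertex has at least $k-1$ neighbours, and the girth exceeds $2k$. So we can embed $R_k$ greedily level by level from any root $u$: each vertex of $R_k$ has at most $k-1$ neighbours in $R_k$ (its parent and at most $k-2$ children, or $k-1$ children for the root), and the high girth guarantees that the newly chosen neighbours are distinct from previously chosen vertices. The vertex-count hypothesis is consistent with $|V(R_k)|=(k-3)2^{k-1}+k+2$ and is used only to ensure that $G$ is large enough.
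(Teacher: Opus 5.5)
Your reduction to $z(G)\ge k$, the argument that $T$ is induced, and part (ii) are fine. The gap is in the step you flag yourself: extending $c$ to the high-degree set $D$ so that the result is both a proper $k$-colouring and Grundy.

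The ordering you ask for, with each $x\in D$ coloured after at most $k-1$ of its neighbours, need not exist. Suppose a component $Q$ of $G[D]$ is $(k-1)$-regular and every vertex of $Q$ has a neighbour in $T$. This is exactly the configuration where list sizes equal degrees and a Brooks-type theorem is needed. Whichever vertex of $Q$ comes last already has $k-1$ coloured $D$-neighbours plus a coloured $T$-neighbour, so First-Fit may be forced to colour $k+1$.

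Your rescue does not work either. Such a vertex is only guaranteed $k-1$ neighbours of degree at least $k$, namely those in $D$. Its neighbour in $T$ may have small degree in $G$, so there is no conflict with $m^{\ast}(G)=k-1$.

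Relatedly, ``every outside vertex has at most one neighbour in $T$'' is not established. Cycles of length exactly $2k+2$ are allowed, so an outside vertex can be adjacent to several depth-$k$ leaves in different branches of the root. The colour-$1$ observation only gives that it sees one \emph{colour} on $T$. That suffices for $|L(x)|\ge k-1$, but not for your neighbour count.

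The missing idea is to separate properness from the Grundy property, as the paper does.

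First produce some proper extension of $c$ with colours $1,\dots,k$. List-colour each component of $G[D]$ using Proposition \ref{prop}; lists have size at least $k-1\ge\Delta(G[D])$. A minimal non-colourable subgraph would be $(k-1)$-regular with every block a $K_2$ or a long odd cycle, because of the girth. For $k\ge 4$ this is impossible: an end-block contains a non-cut vertex of degree at most $2$. The case $k=3$ reduces to Theorem \ref{mainthm}. Then colour the vertices of degree at most $k-1$ greedily.

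Only afterwards make the colouring Grundy, using a procedure that never raises a colour. For $j=2,\dots,k$ in turn, move each vertex of colour $j$ that misses some smaller colour in its neighbourhood to the smallest missing colour. Alternatively, rerun First-Fit on $V(G)\setminus V(T)$, taking the colour classes of the proper colouring in increasing order. No colour exceeds $k$, and $T$ is never altered because each vertex of $T$ already sees all smaller colours inside $T$. Hence $u$ remains a nice vertex of colour $k$.
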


\noindent\begin{proof}
If $k=3$ then existence of $R_3$ is equivalent to $m^{\ast}=2$. In this case girth of $G$ is at least 8 then Theorem \ref{mainthm} is applied. The ${\rm b}^{\ast}$-coloring of the set $P$ with $3$ colors in the proof of Theorem \ref{mainthm} is a $z$-coloring and the coloring can be extended to a Grundy-coloring of whole $G$. It follows that $z(G)=m^{\ast}+1=3$. Assume hereafter that $k\geq 4$.

\noindent Let \(T\) denote the subgraph of $G$ isomorphic to \(R_k\) with \(u\) as its root. Let \(c: V(T) \rightarrow \{1, \ldots , k\}\) be the canonical \(z-\)coloring on \(T\). Since the girth is at least $2k+2$ then \(T\) is an induced subgraph of \(G\). We prove that every vertex in \(G\setminus T\) sees at most one color on its neighbors in \(V(T)\).
As we explained in the previous paragraph concerning the structure of $R_k$, the vertices in the last level of $R_k$ have color $1$ in its canonical coloring $c$. It means that if a vertex $x\in V(G)\setminus V(T)$ is adjacent to two vertices of $T$ with distinct colors then a cycle of length less than $2k+2$ is formed. It follows that for each \(x \in V(G) \setminus V(T)\)
		\[
		|c(N_G(x)\cap V(T))| \leq 1.
		\tag{1} \label{1}
		\]
\noindent Now define the set \(D = \{x \in V(G) \setminus V(T) : d_G(x)\geq k \}\). For each \(x \in D\), we define the list of available colors as follow.
		\[
		L(x) = \{1, \ldots k\} \setminus c(N_G(x) \cap V(T)).
		\]
		By \eqref{1}, for each \(x \in D\):
		\[
		|L(x)| \geq k-1.
		\tag{2} \label{2}
		\]
For the induced subgraph \(G[D]\) we have
		 \[
		 \Delta (G[D]) \leq k-1
		 \tag{3} \label{3}
		 \]
The reason is that if a vertex \(x \in D\) has \(k\) neighbors in \(D\) then all of them have degree at least \(k\) in \(G\) so \(m^*(G) \geq k\), a contradiction. Then \eqref{3} is valid. Now consider a connected component \(Q\) of \(G[D]\).

\noindent Case 1: \( \Delta(Q) \leq k-2\)

\noindent In this case by \eqref{2}, we have \(|L(x)| \geq k-1 \geq \Delta(Q)+1 \), for each \(x \in V(Q)\). Then \(Q\) is \(L\)-colorable using a greedy procedure.

\noindent Case 2: \( \Delta(Q)= k-1 \)

\noindent \noindent In this case, we have \(k \geq 4\) then \( \Delta(Q) \geq 3\). Also the girth of $Q$ is at least $2k+2$. Then no block in \(Q\) is complete graph on at least $3$ vertices. Since\( \Delta(Q) \geq 3\) then $Q$ is not isomorphic to cycle graph. Let $B$ be a pendant block in $Q$. If $B$ is isomorphic to either $K_2$ or a cycle then $B$ contains a vertex of degree at most $2$ and its degrees in $B$ and $Q$ are equal. It implies that bo block in $Q$ can be cycle graph or $K_2$. By Proposition \ref{prop} $(i)$, \(Q\) with lists of size at least \(k-1\) for its vertices is $L$-colorable.

\noindent By applying this argument for all connected components of \(G[D]\), we obtain that coloring \(c\) from \(T\) can be extended to a proper coloring for \(G[V(T) \cup D]\) using colors \(\{ 1, \ldots k\}\). Now set \(R = V(G) \setminus (V(T) \cup D)\). By the definition of \(D\), for each vertex \(x \in R\) we have
		  \[
		  d_G(x) \leq k-1
		  \tag{4} \label{4}
		  \]
Let \(S \subseteq R\) be the maximal set such that \(c\) can be extended to \(G[V(T) \cup D \cup S]\). If \(S \neq R\), choose a vertex \(x \in R \setminus S \). All the colored neighbors of \(x\) are in \(V(T) \cup D \cup S\) and by  \eqref{4} they are at most \(k-1\) vertices. So the maximum number of colors that are used in the neighborhood of \(x\) is \(k-1\) and thus there exists at least one color say \( \alpha\) available for \(x\). By \(c(x)= \alpha\) the coloring will be extended to \(S \cup \{x\}\), a contradiction with the maximality of \(S\). Hence, \(S=R\). So the coloring of \(T\) is extended to a proper coloring $c^+$ of the entire \(G\) using \(k\) colors.

\noindent Coloring $c^+$ is not necessarily Grundy-coloring on $V(G)\setminus V(T)$. We revise $c^+$ in order to make it Grundy-coloring. Starting from the vertices of color $2$, if a vertex has not any neighbor of color $1$ then switch its color to $1$. Then scan all vertices $u$ of color $3$. If colors $1,2$ appear in $N_G(u)$ then its color remains unchanged, otherwise switch its color to the smallest one which is not appeared in $N_G(u)$. Then we scan all vertices of color $4$ and make a similar revision. Repeat this scenario for all other colors. We eventually obtain a Grundy-coloring in which $u$ is nice and its neighbors $\{u_1, \ldots, u_{k-1}\}$ are b-vertices of colors $\{1, \ldots k-1\}$, respectively. It follows that $z(G)\geq k$. Since $m^{\ast}(G)=k-1$ then $z(G)=k$.

\noindent To prove $(ii)$, note that $G$ contains any tree $T$ as ordinary subgraph if $\Delta(T)\leq k-1$, the radius of $T$ is at most $k$ and $|V(G)|\geq |V(T)|$. This fact is easily proved by induction on $|V(T)|$. By the assumption $|V(G)|\geq |V(R_k)|$. Then $R_k$ satisfies these conditions. It follows that $R_k$ is isomorphic to a subgraph of $G$. By $(i)$, $z(G)=k$.
	\end{proof}

\noindent We finish the paper by generalizing Theorem \ref{z} to arbitrary $z$-atoms. Recall from the introduction that if  $G$ is a graph with $z(G)\geq k$ then there exists a $k$-$z$-atom $H$ such that $(H,c)$ is embedded in $G$, where $c$ is the canonical $z$-coloring of $H$ using $k$ colors. The converse of this fact does not necessarily hold.

\begin{prop}
Let $k\geq 4$, $H$ be a $k$-$z$-atom and $c$ be the canonical coloring of $H$. Let $\ell$ be the maximum distance between two vertices of $H$ whose colors are distinct in $c$. Let $G$ be a graph which contains $H$ as subgraph such that every cycle in $G$ have length at least $\ell+3$ unless the cycle is entirely contained in $H$. Let $m^{\ast}(G)=k-1$. Then $z(G)=k$.
\end{prop}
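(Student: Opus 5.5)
The plan is to mimic the proof of Theorem \ref{z}(i), with the $z$-atom $H$ playing the role of $R_k$. The upper bound $z(G)\leq m^{\ast}(G)+1=k$ is already known from \cite{Z2}, so only $z(G)\geq k$ is needed. Start from the canonical $z$-coloring $c$ of $H$ with colors $1,\ldots,k$. It has a nice vertex $u$ of color $k$, and it is a Grundy coloring on $H$. The aim is to extend $c$ to all of $G$ and then repair the extension into a Grundy coloring without touching $V(H)$.

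\emph{Step 1.} First check that $c$ stays proper inside $G$. An extra edge of $G$ between two vertices of $H$ with distinct colors is harmless for properness. An extra edge $xy$ with $c(x)=c(y)$ and $x\neq y$ would close a cycle with a path of $H$. I will use the length bound on cycles not contained in $H$ to exclude the relevant short cycles. If this fails, I will assume, as in the definition of embedding, that $(H,c)$ is embedded in $G$.

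\emph{Step 2.} Show that every $x\in V(G)\setminus V(H)$ sees at most one color of $c$. Suppose $x$ has neighbors $a,b\in V(H)$ with $c(a)\neq c(b)$. A shortest $a$--$b$ path in $H$ has length at most $\ell$ if $H$ is connected, and then together with $xa,xb$ it gives a cycle of length at most $\ell+2$ that is not contained in $H$. This contradicts the hypothesis. I will handle disconnected atoms by taking $\ell$ over pairs in the same component, or by noting that atoms are connected.

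\emph{Step 3.} Extend the coloring, following the previous proof. Let $D=\{x\notin V(H): d_G(x)\geq k\}$ and $L(x)=\{1,\ldots,k\}\setminus c(N_G(x)\cap V(H))$, so $|L(x)|\geq k-1$ by Step 2. The condition $m^{\ast}(G)=k-1$ gives $\Delta(G[D])\leq k-1$. For each component $Q$ of $G[D]$ there are two cases.
\begin{itemize}
\item If $\Delta(Q)\leq k-2$, color $Q$ greedily.
\item If $\Delta(Q)=k-1\geq 3$, every cycle of $Q$ is outside $H$ and so has length at least $\ell+3\geq 4$. Hence no block of $Q$ is a complete graph on three or more vertices. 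An end-block isomorphic to $K_2$ or a cycle would contain a non-cut vertex of degree at most $2<k-1$, which would make the list strictly exceed the degree. Proposition \ref{prop}(i),(ii) then give $L$-colorability.
\end{itemize}
The remaining vertices have degree at most $k-1$, so they are colored greedily as before.

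\emph{Step 4.} Repair the Grundy property. Process colors $2,3,\ldots,k$ in increasing order, and recolor each vertex outside $H$ to the smallest color missing from its neighborhood whenever it lacks some smaller color. Vertices of $H$ keep their colors. Each $w\in V(H)$ with $c(w)=j$ already has neighbors of all colors $<j$ inside $H$, and these never change, so the Grundy condition holds at $w$. The b-vertices of $H$ adjacent to $u$ likewise keep their required neighbors, so $u$ stays nice of color $k$. Recoloring only decreases colors, which keeps the coloring proper, and after the pass over color $j$ every vertex of color $j$ sees all smaller colors.

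\emph{Main obstacle.} The delicate step is Step 4. Lowering the color of a vertex outside $H$ could destroy the Grundy condition for an already-processed neighbor. It does not affect vertices of $H$, since their witnesses lie in $H$. I will argue by processing in increasing color order: a vertex is moved only to a color below its current one, and any vertex whose condition is broken is scanned later. The process terminates because the sum of colors strictly decreases.
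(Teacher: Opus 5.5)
Your proposal is correct and follows essentially the same route as the paper: each vertex outside $H$ sees at most one color of $c$, the high-degree set $D$ is list-colored via Proposition~\ref{prop}, the remaining low-degree vertices are colored greedily, and a Grundy repair recolors only vertices outside $H$. The hedge in your Step~1 is not needed: if $x,y\in V(H)$ have the same color, the neighbor $x'$ of $x$ on a shortest $x$--$y$ path in $H$ has a color different from $y$, so $d_H(x,y)\le \ell+1$ and an extra edge $xy$ would close a cycle of length at most $\ell+2$ not contained in $H$ --- this also covers the same-color case that the paper's inducedness argument skips over.
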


\noindent\begin{proof}
The proof is similar to the proof of Theorem \ref{z}. Note that $H$ is an induced subgraph of $G$. Otherwise, let $u$ and $w$ be two vertices in $H$ but $uw\in E(G)$. Then $c(u)\not= c(w)$ and there exists a path of length at most $\ell$ in $H$. We obtain a cycle of length at most $\ell+1$ which is not entirely in $H$. Note also that for each $x \in V(G) \setminus V(T)$, $|c(N_G(x)\cap V(T))| \leq 1$. Now define \(D = \{x \in V(G) \setminus V(T) : d_G(x)\geq k \}\). For each \(x \in D\), define $L(x)= \{1, \ldots k\} \setminus c(N_G(x) \cap V(T))$. By the previous fact, for each $x \in D$, $|L(x)| \geq k-1$. Also for the induced subgraph \(G[D]\) we have $\Delta (G[D]) \leq k-1$. Now an argument similar the one in the proof of Theorem \ref{z} shows that $G[D]$ is $L$-colorable. It means that $c$ can be extended to a proper coloring $c$ of $G[D]$ using $1, \ldots, k$. As in the proof of Theorem \ref{z}, $c$ can be extended to a Grundy-coloring of $G$. This completes the proof.
\end{proof}

\end{document}